\def\R{{\mathbb R}}
\def\C{{\mathbb C}}
\def\<{\langle}
\def\>{\rangle}
\def\P{{\mathbb P}}
\def\E{{\mathbb E}}
\def\<{\langle}
\def\>{\rangle}
\newcommand{\bel}{\begin{equation}\label}
\newcommand{\ee}{\end{equation}}
      \newtheorem{theorem}{Theorem}[section]
       \newtheorem{proposition}[theorem]{Proposition}
\theoremstyle{definition}
\date{printed \today\ file \jobname.tex}
\begin{document}

\title{Linearity of regression for weak records, revisited}
\author{Rafa{\l} Karczewski, Jacek Weso\l owski}
\address{Rafa{\l} Karczewski\\
Wydzia{\l} Matematyki i Nauk Informacyjnych\\
Politechnika Warszawska\\
Koszykowa 75\\
00-662 Warszawa, Poland}
\email{karczewskir@student.mini.pw.edu.pl}
\address{Jacek Weso{\l}owski\\
Wydzia{\l} Matematyki i Nauk Informacyjnych\\
Politechnika Warszawska\\
Koszykowa 75\\
00-662  Warszawa, Poland}
\email{wesolo@mini.pw.edu.pl}
\begin{abstract}
Since many years characterization of distribution by linearity of regression of non-adjacent weak records $\E(W_{i+s}|W_i)=\beta_1W_i+\beta_0$ for discrete observations has been known to be a difficult question. L\'opez-Bl\'azquez (2004) proposed an interesting idea of reducing it to the adjacent case and claimed to have the characterization problem completely solved. We will explain that, unfortunately, there is a flaw in the proof given in that paper. This flaw is related to fact that in some situations the operator responsible for reduction of the non-adjacent case to the adjacent one is not injective. The operator is trivially injective when $\beta_1\in(0,1)$. We show that when $\beta_1\ge 1$ the operator is injective when $s=2,3,4$. Therefore in these cases the method proposed by L\'opez-Bl\'azquez is valid. We also show that the operator is not injective when $\beta_1\ge 1$ and $s\ge 5$. Consequently, in this case the reduction methodology does not work and thus the characterization problem remains open.
\end{abstract}
\maketitle
\section{Introduction}
The issue of characterization of the common distribution of a sequence $(X_n)_{n\ge 1}$ of iid variables by linearity of regression of records $\E(R_m|R_n)=\beta_1R_n+\beta_0$ for $m\ne n$ has attracted the attention of researchers since the seventies in the last century, when Nagaraja (1977), assuming that the common distribution of $X_n$'s is continuous and following methods developed by Ferguson (1967) for order statistics, characterized the triplet of exponential, power and Pareto type distributions in the case $m=n+1$. In Nagaraja (1988) the case $m=n-1$ for continuous distribution was solved by reducing the problem to the one for order statistics. As a result another triplet of distributions was characterized. The characterization in the case $m=n+2$ was done in Ahsanullah and Weso\l owski (1998) through reducing the problem to second order ordinary differential equation and a careful look at its probabilistic solutions. The characterization issue for continuous distributions was finally resolved in the general case of linearity of regression for non-adjacent records in Dembi\'nska and Weso\l owski (2001) by using integrated Cauchy functional equation in case $m>n$ and in case $m<n$ by reducing the problem to an analogous problem for order statistics, the latter being solved by a similar method earlier in Dembi\'nska and Weso\l owski (1998). Since that time the case of continuous parent distribution has been studied further e.g. for generalized order statistics and for other patterns of regression functions. For these and related issues see e.g. L\'opez-Bl\'azquez and Moreno-Rebollo (1997), Bieniek and Szynal (2003), Cramer, Kamps and Keseling (2004), Gupta and Ahsanullah (2004), Bairamov, Ahsanullah and Pakes (2005), Bieniek (2007), Bieniek (2009), Yanev (2012) and Ahsanullah and Hamedani (2013), Beg, Ahsanullah and Gupta (2013).

In the case of discrete distribution instead of records $(R_n)$, which are defined through a strict inequality, it is more natural to consider weak records $(W_n)$, which are defined by "$\ge$" relation. That is, a repetition of the last weak record is the next weak record, while for regular records repetitions of records are discarded. In this case, the issue of characterization of the distribution of $X_n$'s through  linearity of regression $\E(W_m|W_n)=\beta_1W_n+\beta_0$ for $m\ne n$ seems not to be related to the methods developed in the continuous case. In particular, under natural assumption that the support of the common law of $X_n$'s is a set of the form $\{0,1,\ldots,N\}$ with $N\le \infty$ we see that in the case of $m<n$, due to monotonicity of $(W_n)$ sequence, we have $\beta_0=0$. To the best of our knowledge, under this assumption ($m<n$) the characterization was obtained only in two special cases: $\E(W_1|W_2)=\beta_1W_2$ for $\beta_1>0$ in Lop\'ez-Bl\'azquez and Weso\l owski (2001) and $\E(W_m|W_n)=\tfrac{m}{n}W_n$ in Lop\'ez-Bl\'azquez and Weso\l owski (2004).

For the case $m>n$ the characterization of distribution of $X_n$'s was first given in Stepanov (1994) for $m=n+1$  with an  improvement in Weso\l owski and Ahsanullah (2001) - see also related papers Aliev (1998), Danielak and Dembi\'nska (2007), Dembi\'nska and L\'opez-Bl\'azquez (2005). The case $m=n+2$ for $\beta_1=1$ was considered in Aliev (2001) and for general $\beta_1$ in Weso\l owski and Ahsanullah  (2001), where an approach via solution of a non-linear difference equation was applied. In this way a triplet of geometric and negative hypergeometric distributions of the first and second kind was characterized. For the general case $m>n$ L\'opez-Bl\'azquez (2004) (we refer to this paper by LB in the sequel) proposed an intriguing idea of reduction of the problem to the adjacent case of $m=n+1$, for which the solution has been already known. However, as it will be explained below, this interesting approach is not as universal as it is claimed in that paper. It appears that there are some inaccuracies in the proof in the case $\beta_1\ge 1$, that is when $N=\infty$. When we encountered these inaccuracies we were rather confident that it would be possible to overcome them while preserving this brilliant idea of reduction to the adjacent case $m=n+1$. As we will see, this can be done only if $0<m-n\le 4$. Unfortunately, for higher distances between $m$ and $n$ the idea introduced in LB does not work. Therefore the characterization in the case $m>n+4$ and $\beta_1\ge 1$ still remains an open problem.

Finally, let us mention that the issue of characterization of discrete distributions by linearity of regression of ordinary records $\E(R_m|R_n)=\beta_1R_n+\beta_0$ has also been considered in the literature. If $m>n$ only characterizations of tails of distribution were eventually obtained, see e.g. Srivastava (1979), Kirmani and Alam (1980), Ahsanullah and Holland (1984), Korwar (1984),  Rao and Shanbhag (1986), Nagaraja, Sen and Srivastava (1989), Huang and Su (1999). If $m<n$ no elegant characterization seems to be possible, see L\'opez-Bl\'azquez and Weso\l owski (2004), except the case $m=1$, $n=2$, see  L\'opez-Bl\'azquez and Weso\l owski (2001) and Franco and Ruiz (2001).

\section{Passing from the non-adjacent case to the adjacent one is problematic}
We consider a sequence $(X_n)$ of iid random variables having the common distribution ${\bf p}=(p_k)$ supported on $\{0,1,\ldots,N\}$, $N\le \infty$. That is, $p_k=\P(X_1=k)$, and we also write $q_k=\P(X_1\ge k)$, $k=0,1,\ldots,N$. For such a sequence, we consider the respective sequence of weak records $(W_n)$ which is defined as follows: Let $T_1=1$ and $T_n=\inf\{k>T_{n-1}:\,X_k\ge X_{T_{n-1}}\}$, $n>1$. Then $W_n=X_{T_n}$, $n\ge 1$. The joint distribution of the first $n$ weak records can be easily derived as
$$
\P(W_1=k_1,\ldots,W_n=k_n)=p_{k_n}\prod_{j=1}^{n-1}\,\tfrac{p_{k_j}}{q_{k_j}},\quad 0\le k_1\le \ldots\le k_n.
$$
Weak records were introduced in Vervaat (1973) and since then are one of important models for ordered discrete random variables. Their basic properties can be found in any monograph on records, e.g., Ch. 2.8 of Arnold, Balakrishnan and Nagaraja (1998), Ch. 16 of Nevzorov (2001) or in Ch. 6.3. of relatively recent monograph Ahsanullah and Nevzorov (2015). It is well-known that weak records form a homogeneous Markow chain with the transition probability of the form
$$
\P(W_n=k_n|W_{n-1}=k_{n-1})=\tfrac{p_{k_n}}{q_{k_{n-1}}},\quad k_n\ge k_{n-1}\ge 0.
$$
Therefore, for $m<n$
$$
\P(W_n=k_n|W_m=k_m)=\sum_{k_m\le k_{m+1}\le\ldots\le k_{n-1}\le k_n} \prod_{i=m}^{n-1}\,\tfrac{p_{k_{i+1}}}{q_{k_i}},\quad k_n\ge k_m\ge 0.
$$

For fixed positive integers $i,s$ we will be interested in conditional expectation $\E(W_{i+s}|W_i)$. Therefore we need to assume that ${\bf p}$ is such that this conditional expectation is finite. Since the conditional distribution of $W_{i+s}|W_i$ does not depend on $i$, we will denote the set of distributions ${\bf p}$ for which $\E(W_{i+s}|W_i)$ is finite by $\mathcal{M}_s$.

Let us consider a family $C_s$ of discrete distributions ${\bf p}=(p_k)_{k\ge 0}\in \mathcal{M}_s$, concentrated on $\lbrace 0,1,2, \dots , N \rbrace$ ($N \le \infty$) with  property that if the common law of iid random variables $(X_n)_{n\ge 1}$ belongs to $C_s$ then the regression of weak records $\mathbb{E} \left( W_{i+s} | W_i \right)$ is linear. It is known that $C_1 \subseteq C_s$ for all $s\ge 1$. We are interested in the opposite inclusion. In LB it is claimed that the opposite implication holds true, however the proof of this inclusion given in there is not correct. We will explain why it is not correct, then improve the method proposed in LB to show that the inclusion holds true for $s=2,3,4$ and finally we will show that the method fails for $s \ge 5$.

Before we state the result from LB we need to introduce some notation. Let ${\bf v}=\left( v(0), v(1), \hdots, v(N) \right) \in \mathbb{C}^{N+1}$ (for $N=\infty$, ${\bf v}=\left( v(0),v(1), \hdots \right)$). Let us define a linear operator:
\[
A : D(A) \longrightarrow \mathbb{C}^{N+1} \hspace{2pt} ; \hspace{5pt} Av(l)=\frac{1}{q_l} \sum_{k=l}^N v(k) p_k,\quad l=0,1,\ldots,N,
\]
where
\[
D(A)=\lbrace {\bf v} \in \mathbb{C}^{N+1} : \sum_{k=0}^{N} \left| v(k) \right| p_k < \infty \rbrace.
\]
We also define the domain of composition of operator $A$ with itself since we will need that later on:
\[
D(A^m)= \lbrace {\bf v} \in D(A) : A^k {\bf v} \in D(A) \mbox{ for } k=1,\hdots, m-1 \rbrace \mbox{ for } m \ge 2,
\]
where
\[
A^0 {\bf v}={\bf v} \mbox{ and } A^m {\bf v}=A\left( A^{m-1} {\bf v} \right) \mbox{ for } m \ge 1.
\]
Below we present matrix representation of the operator $A$ (which is an infinite matrix when $N=\infty$):
\[
A=\begin{bmatrix} p_0& p_1 & p_2 & \hdots & p_N \\0 & \frac{p_1}{q_1} & \frac{p_2}{q_1} & \hdots & \frac{p_N}{q_1} \\ 0 & 0 & \frac{p_2}{q_2} & \hdots & \frac{p_N}{q_2} \\ \vdots & \vdots & \vdots & \ddots & \vdots \\ 0 & 0 & 0 & \hdots & \frac{p_N}{q_N} \end{bmatrix} .
\]
Note that $A$ is an upper-triangular matrix with nonzero diagonal entries.
Let $e_m(l)=\mathbb{E} \left(W_{i+m} | W_i=l \right)$. Then, directly from the form of the conditional distribution it follows that
\begin{equation}
{\bf e}_{m+1}=A{\bf e}_m \hspace{6pt} \mbox{for} \hspace{6pt} m=1,2, \ldots
\end{equation}
In particular ${\bf e}_m$ is in the domain of $A$, given that ${\bf e}_{m+1}$ exists. Now we can state the theorem proposed in LB.
\begin{theorem}
\label{th:hipoteza}
Let $X$ be a random variable with discrete distribution with support $\lbrace 0,1,2, \dots , N \rbrace$ ($N \le \infty$). Let $(W_n)$ be the sequence of weak records built on a sequence $(X_n)$ of iid random variables having the same distribution as $X$. Assume that for some $i,s\ge 1$
\begin{equation}
\label{eq:zalozenie}
\mathbb{E} \left( W_{i+s} | W_i \right)=\beta_0 +\beta_1 W_i ,
\end{equation}
where $\beta_0, \beta_1 \in \mathbb{R}$. Then $\beta_0,\beta_1 >0$. Let $\gamma_0,\gamma_1$ be unique solutions of
\begin{equation}
\label{eq:rownania}
\beta_1=\gamma_1^s , \qquad \beta_0=\gamma_0 \frac{1-\gamma_1^s}{1-\gamma_1} .
\end{equation}
Then
\begin{enumerate}
\item if $0 < \beta_1 <1$, then $\frac{\gamma_0}{1-\gamma_1} \in \mathbb{N}$
\[
X \sim \mbox{\textup{nh}}_I \left( 1, \frac{\gamma_1}{1-\gamma_1},\frac{\gamma_0}{1-\gamma_1} \right) ,
\]
\item if $\beta_1=1$, then
\[
X \sim \mbox{\textup{geo}} \left(\frac{1}{1+\gamma_0} \right) ,
\]
\item if $\beta_1 >1 $, then
\[
X \sim \mbox{\textup{nh}}_{II} \left( 1, \frac{\gamma_0+1}{\gamma_1-1},\frac{\gamma_0}{\gamma_1-1} \right) .
\]
\end{enumerate}
\end{theorem}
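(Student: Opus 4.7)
My approach begins with the observation that, writing $\iota(l)=l$, we have $e_0=\iota$ and the recursion ${\bf e}_{m+1}=A{\bf e}_m$ yields $e_m=A^m\iota$, so \eqref{eq:zalozenie} becomes the operator identity
\[
(A^s-\beta_1 I)\iota=\beta_0\cdot{\bf 1},
\]
where ${\bf 1}$ denotes the constant function equal to $1$. The sign claims $\beta_0,\beta_1>0$ can be dispatched first: setting $l=0$ gives $\beta_0=\E(W_{i+s}|W_i=0)\ge 0$, and nontriviality of the law of $X$ rules out equality; the stochastic monotonicity built into the transition $p_{k_n}/q_{k_{n-1}}$ makes $l\mapsto\E(W_{i+s}|W_i=l)$ non-decreasing, so $\beta_1\ge 0$, with $\beta_1=0$ again excluded by nondegeneracy.

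Once $\beta_0,\beta_1>0$, define $\gamma_0,\gamma_1$ via \eqref{eq:rownania} and factor
\[
A^s-\gamma_1^s I=(A-\gamma_1 I)\,P(A),\qquad P(\lambda)=\sum_{j=0}^{s-1}\gamma_1^j\,\lambda^{s-1-j}.
\]
Because $A$ and $\gamma_1 I$ commute, this is a clean operator factorization, and setting $w:=(A-\gamma_1 I)\iota=e_1-\gamma_1\iota$ rewrites the hypothesis as $P(A)w=\beta_0\cdot{\bf 1}$. Using $A{\bf 1}={\bf 1}$, a direct computation gives $P(A)(\gamma_0{\bf 1})=\gamma_0(1+\gamma_1+\cdots+\gamma_1^{s-1}){\bf 1}=\beta_0{\bf 1}$ by \eqref{eq:rownania}, so $w_0=\gamma_0\cdot{\bf 1}$ is a particular solution. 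If $P(A)$ restricted to the relevant subspace of $D(A^{s-1})$ has trivial kernel, then $w=w_0$, i.e.\ $e_1(l)=\gamma_0+\gamma_1 l$; this reduces everything to the adjacent case $m=n+1$, and the three distributional conclusions (1)--(3) follow from the known characterization of Stepanov (1994) and Weso\l owski--Ahsanullah (2001).

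The injectivity of $P(A)$ on $D(A^{s-1})$ is therefore the crux, and I expect it to be the main obstacle. When $0<\beta_1<1$ the target characterization forces finite support, so $A$ and $P(A)$ are finite upper-triangular matrices; the eigenvalues of $P(A)$ are the values $P(p_k/q_k)$ with $p_k/q_k\in[0,1]$, and a quick check shows $P(\mu)>0$ for every $\mu\in[0,1]$ and $\gamma_1>0$, so invertibility is immediate. The genuinely delicate case is $\beta_1\ge 1$, where $A$ acts on an infinite-dimensional space and the diagonal alone no longer controls the kernel of $P(A)$. There $P(A)u=0$ unfolds, via the upper-triangular structure of $A$, into a linear recursion of depth $s$ on the entries of $u$ with coefficients depending on ${\bf p}$, and whether admissible solutions distinct from $0$ exist in $D(A^{s-1})$ is exactly the subtle phenomenon flagged in the abstract; the rest of the paper will show that the answer is negative for $s=2,3,4$ (so the plan succeeds and the theorem is established) but positive for $s\ge 5$ (so the reduction strategy must be abandoned, leaving the characterization open).
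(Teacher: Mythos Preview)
Your proposal is correct and coincides with the paper's argument: your operator $P(A)=\sum_{k=0}^{s-1}\gamma_1^{s-1-k}A^k$ is exactly the paper's $B_s$, and your vector $w-\gamma_0{\bf 1}=e_1-\gamma_0{\bf 1}-\gamma_1\iota$ is the paper's ${\bf d}_1$, so both reductions land on the identical equation $B_s{\bf d}_1=0$ and the same injectivity question. The only cosmetic difference is that the paper reaches this via the intermediate sequences ${\bf d}_m$ and the recursion ${\bf d}_{m+1}=\gamma_1^m{\bf d}_1+A{\bf d}_m$, whereas you factor $A^s-\gamma_1^sI=(A-\gamma_1 I)B_s$ directly; your route is a bit more streamlined but not a different idea.
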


The symbols of distributions above have the following meaning: $\mathrm{nh}_I$ is for the negative hypergeometric distribution of the first kind, $\mathrm{geo}$ is for the geometric distribution, $\mathrm{nh}_{II}$ is for the negative hypergeometric distribution of the second kind (more details on $\mathrm{nh}_I$ and $\mathrm{nh}_{II}$ laws can be found e.g. in Weso\l owski and Ahsanullah (2001)).

We will now recall basic steps in the proof given in LB. Observe,  that since $e_s(l)=\mathbb{E} \left(W_{i+s} | W_i=l \right)$ is strictly increasing, we have  $\beta_1 > 0$ and $\beta_0=e_s(0) >0$. Let $\gamma_0,\gamma_1$ be unique solutions of (\ref{eq:rownania}). Now, for $m=1, \dots, s$ we define ${\bf d}_m$ through the equality
\begin{equation}
\label{eq:zapis}
e_m(j)=\gamma_0 \frac{1-\gamma_1^m}{1-\gamma_1}+ \gamma_1^mj+d_m(j), \qquad j=0,1,\ldots, N.
\end{equation}
Directly from the definition of ${\bf d}_m$ and the assumption that ${\bf e}_s$ exists we obtain that ${\bf d}_m$ is in the domain of $A$ for $m=1,...,s-1$. From (\ref{eq:zalozenie}) we have that ${\bf d}_s=0$. After easy algebra we obtain
\begin{equation}
\label{wynik}
{\bf d}_{m+1}=\gamma_1^m{\bf d}_1+A{\bf d}_m, \qquad m=1,\ldots, s-1.
\end{equation}
From (\ref{wynik}) we can obtain that ${\bf d}_m$ is in the domain of $A^2$ for $m=1,...,s-1$ and by iterating (\ref{wynik}) we get that ${\bf d}_1$ is in the domain of $A^{s-1}$. This can be iterated and, consequently,
\begin{equation}
\label{eq:dowod}
{\bf d}_m=B_m{\bf d}_1, \qquad m=1,\ldots,s, \hspace{5pt}\quad \mbox{where}\quad B_m=\sum_{k=0}^{m-1}\gamma_1^{m-1-k}A^{k}.
\end{equation}
Let us note that $A$ and, consequently, $B_m$ depends on the unknown distribution ${\bf p}=(p_n)_{n\ge 0}$. To emphasize this fact, sometimes we will write $B_m^{({\bf p})}$ instead of $B_m$.

At this stage of argument we read in LB:
\begin{quotation}
\textit{Note that $B_m$ is an upper-triangular matrix with nonzero diagonal entries; then $B_m$ has an inverse (even in the infinite case)}
\end{quotation}
This is why, besides the case $N<\infty$ (equivalent to $\gamma_1\in(0,1)$), the proof is incorrect. The above statement is false in the case $N=\infty$ (that is $\gamma_1\ge 1$). Infinite matrices represent linear operators between linear spaces. In general such transformations, which are represented by infinite upper-triangular matrices with nonzero diagonal entries, do not have to be invertible and even injective. As an example consider a linear transformation $B:\R^{\infty}\to \R^{\infty}$ represented by the matrix
\[
B=\left[b_{ij} \right]_{i,j=0}^{\infty} \hspace{2pt} ; \hspace{5pt} b_{ij}=\begin{cases} 1 & \mbox{for } i=j \mbox{ or } j=i+1 \\ 0 & \mbox{otherwise}. \end{cases}
\]
Obviously, $B$ is an upper-triangular matrix with nonzero diagonal entries. Let $v=\left(1,-1,1,-1, \hdots \right)$. Then $Bv=0$ and thus $B$ is not injective in $\R^{\infty}$, consequently, it cannot be invertible. However, if we consider $B$ as a linear operator on the space of sequences convergent to $0$, then $B$ is invertible with $B^{-1}$ being also upper-triangular with $n$-th row of the form $(0,\ldots,0,1,-1,1,-1,\ldots)$, where the first $1$ is at the position $n$, $n\ge 1$.

In the next section we will discuss in detail injectivity of the operator $B_s$ defined in \eqref{eq:dowod}, which is of crucial importance since the rest of the argument from LB lies in plugging $m=s$ in (\ref{eq:dowod}). Since, as it was observed before, ${\bf d}_s=0$, it follows that
\[
B_s^{({\bf p})}{\bf d}_1=0.
\]
So if $B_s^{({\bf p})}$ was injective for any ${\bf p}\in\mathcal{M}_s$ we would get ${\bf d}_1=0$ and, consequently,
\[
e_1(j)=\gamma_0+\gamma_1j \quad \mbox{for} \quad j=0,1, \ldots , N .
\]

That is, the crucial problem for validity of the proof as suggested in LB is a question of injectivity of $B_s^{({\bf p})}$ for any ${\bf p}\in\mathcal{M}_s$.

\section{Is the operator $B_s^{({\bf p})}$ injective?}
In this section we will show how injectivity of  $B_s^{({\bf p})}$ defined on $D(B_s^{({\bf p})})=D(A^{s-1})$ depends on $s$. Let us recall that we are considering here only such distributions ${\bf p}$ for which $\mathbb{E}(W_{i+s}|W_i)<\infty$ and, as it has already been mentioned, this condition depends only on $s$ and not on $i$. First, we will consider operators with domains being subsets of $\mathbb{C}^{\infty}=\lbrace \left( x_0,x_1, \dots \right) : x_k \in \mathbb{C} \rbrace$,  the linear space of sequences of complex numbers.
\begin{theorem}
\label{th:iniekcja}
Let $B_s^{({\bf p})}$ be the operator defined by \eqref{eq:dowod} with $N=\infty$ on a domain $D(B_s^{({\bf p})})\subset \C^{\infty}$. Then $B_s^{({\bf p})}$ is injective for any distribution ${\bf p}\in\mathcal{M}_s$ iff $s \in \lbrace 2,3,4 \rbrace$.
\end{theorem}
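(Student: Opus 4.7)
The plan is to factor
\[
B_s^{(\mathbf{p})} = \prod_{k=1}^{s-1}\bigl(A - \gamma_1 \zeta_s^k I\bigr),\quad \zeta_s = e^{2\pi i /s},
\]
using the algebraic identity $\sum_{k=0}^{s-1}\gamma_1^{s-1-k}x^k = (x^s-\gamma_1^s)/(x-\gamma_1)$, and then to show that injectivity of each first-order factor $A - \gamma_1 \zeta_s^k I$ on $D(A)$ is controlled by the sign of $\cos(2\pi k/s)$. The angles $2\pi k/s$, $k = 1, \ldots, s-1$, all lie in $[\pi/2, 3\pi/2]$ exactly when $s \in \{2, 3, 4\}$ (the angles are $\pi$; $\pm 2\pi/3$; $\pi/2, \pi, 3\pi/2$, respectively), while for $s \geq 5$ the outermost angles $\pm 2\pi/s$ fall in $(-\pi/2, \pi/2)$, opening the door to non-injectivity for a suitable choice of $\mathbf{p}$.

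The central technical lemma will read: if $\lambda = \gamma_1 e^{i\theta} \neq 0$ and $\cos\theta \leq 0$, then $\ker(A-\lambda I)\cap D(A) = \{0\}$. To prove it, suppose $Aw = \lambda w$ with $w \in D(A)$. The identity $(Aw)(l) = \lambda w(l)$ reads $\sum_{k \geq l} w(k) p_k = \lambda q_l w(l)$; subtracting the equations at levels $l$ and $l+1$ yields the recursion $\lambda q_{l+1} w(l+1) = (\lambda q_l - p_l)w(l)$, i.e.\ $q_{l+1} w(l+1) = (1 - r_l e^{-i\theta}) q_l w(l)$ with $r_l = p_l/(\gamma_1 q_l) \geq 0$. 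Consequently
\[
|q_{l+1}w(l+1)|^2 = |q_l w(l)|^2\bigl(1 - 2 r_l\cos\theta + r_l^2\bigr) \geq |q_l w(l)|^2,
\]
so $|q_l w(l)|$ is non-decreasing. On the other hand, $w \in D(A)$ makes $\sum_k w(k) p_k$ absolutely convergent, so $\lambda q_l w(l) = \sum_{k\geq l} w(k)p_k \to 0$; a non-negative non-decreasing sequence with limit $0$ must vanish identically, yielding $w(0) = 0$ and then $w \equiv 0$ by the recursion.

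For the "if" direction ($s \in \{2,3,4\}$) the lemma applies at every root $\gamma_1\zeta_s^k$. Given $v \in D(A^{s-1}) = D(B_s^{(\mathbf{p})})$ with $B_s^{(\mathbf{p})}v = 0$, pick any ordering $k_1,\ldots,k_{s-1}$ of $\{1,\ldots,s-1\}$ and set $v_0 = v$, $v_j = (A - \gamma_1\zeta_s^{k_j}I) v_{j-1}$. Each $v_j$ is a degree-$j$ polynomial in $A$ applied to $v$, hence a linear combination of $A^0 v,\ldots, A^j v$, all of which lie in $D(A)$ as long as $j \leq s-2$. Since the factors commute and their full product equals $B_s^{(\mathbf{p})}$, one has $v_{s-1} = 0$; thus $v_{s-2}\in D(A)$ is an eigenvector of $A$ with eigenvalue $\gamma_1\zeta_s^{k_{s-1}}$, and the lemma forces $v_{s-2} = 0$. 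Iterating the same reasoning downward yields $v_{s-3} = \cdots = v_0 = v = 0$.

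For the "only if" direction ($s \geq 5$) the plan is to construct an explicit counterexample using the geometric law. Take $\mathbf{p}$ geometric, $p_k = (1-\alpha)\alpha^k$; this belongs to $\mathcal{M}_s$ and corresponds to $\gamma_1 = 1$. A direct computation shows $v_\mu(l) = \mu^l$ is an eigenvector of $A$ with eigenvalue $(1-\alpha)/(1-\mu\alpha)$ whenever $|\mu\alpha| < 1$. Solving for the eigenvalue $\zeta_s$ gives $\mu\alpha = 1 - (1-\alpha)\zeta_s^{-1}$ and
\[
|\mu\alpha|^2 = 1 - 2(1-\alpha)\cos(2\pi/s) + (1-\alpha)^2,
\]
which is $< 1$ iff $\alpha > 1 - 2\cos(2\pi/s)$; since $\cos(2\pi/s) > 0$ for $s \geq 5$, such $\alpha \in (0,1)$ exists. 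For this choice, $v_\mu$ is a nonzero element of $D(A^{s-1})\cap\ker(A-\zeta_s I) \subseteq \ker B_s^{(\mathbf{p})}$. The hard part of the argument is the ratio estimate in the lemma: it is precisely the sign of $\cos\theta$ that separates the case $s \leq 4$, where every factor of $B_s^{(\mathbf{p})}$ is automatically injective, from the case $s \geq 5$, where a carefully tuned geometric distribution realizes one of the complex eigenvalues and breaks injectivity.
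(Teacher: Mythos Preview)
Your proof is correct and follows essentially the same route as the paper: both factor $B_s=\prod_{k=1}^{s-1}(A-\gamma_1\lambda_k I)$, reduce injectivity to the question of whether any $\gamma_1\lambda_k$ is an eigenvalue of $A$, control this via the recursion for eigenvectors and the modulus $|1-r_l e^{-i\theta}|^2=1-2r_l\cos\theta+r_l^2$ (your monotonicity of $|q_lw(l)|$ is exactly the paper's observation that each factor of $\prod_k|1-c_k/\lambda|$ exceeds $1$ when $\cos\theta\le 0$), and exhibit a geometric law as counterexample for $s\ge 5$. Your packaging is slightly more streamlined—you avoid the explicit product formula for $S_n(\lambda)$ and you handle the domain issue $v_j\in D(A)$ in the peeling step more carefully than the paper does—but the substance is the same.
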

\begin{proof}
Since $\sum_{k=0}^{s-1}z^k=\prod_{k=1}^{s-1}\left(z-\lambda_k \right)$, where $\lambda_k=\cos \left(\frac{2 k \pi}{s} \right) + i \sin \left(\frac{2 k \pi}{s} \right)$, $k=1,\ldots,s-1$, $s\ge 2$, we can represent the operator $B_s$ in the following way
\begin{equation}
\label{eq:prod}
B_s=\prod_{k=1}^{s-1}\left(A-\gamma_1\lambda_kI \right),
\end{equation}
where $I$ is the identity operator.
Thus if  $\gamma_1\lambda_{\ell}$ is an eigenvalue of $A$ for some $\ell =1,2,...,s-1$, then $B_s$ is not injective. Indeed, if ${\bf x}_{\ell} \in D \left(B_s\right)$ is a respective nonzero eigenvector of $\gamma_1\lambda_{\ell}$, then (note that $\left( A - \gamma_1 \lambda_j I\right)$ and $\left( A - \gamma_1 \lambda_k I\right)$ commute)
\[
B_s{\bf x}_{\ell}=\left[\prod_{\substack{k=1\\ k\ne \ell}}^{s-1}\left(A-\gamma_1\lambda_kI \right)\right](A-\gamma_1\lambda_{\ell}I){\bf x}_{\ell}=0.
\]
Consequently, $B_s$ is not injective.

Assume now that none of $\gamma_1\lambda_{\ell}$, $\ell=1,2,\dots , s-1$ is an eigenvalue of $A$. Then $B_s$ is a composition of injective operators, so $B_s$ must also be injective.

Finally, we conclude that $B_s$ is injective if and only if all $\gamma_1\lambda_{\ell}$, $\ell=1,2,\dots, s-1$, are not eigenvalues of $A$.

We will now examine eigenvalues of $A$ which are of the form $\lambda=\gamma_1\lambda_{\ell}$. Let $\lambda \in \mathbb{C}$, ${\bf x} \in D(A)$, ${\bf x} \neq 0$, be such that $A{\bf x}=\lambda {\bf x}$ which is equivalent to
\bel{pi}
\sum_{j=i}^{\infty}p_jx_j=\lambda x_iq_i \quad \forall\,i\ge 0 .
\ee
After subtracting the equality for $i$ and $i+1$ sidewise we obtain
\[
x_ip_i=\lambda \left(x_iq_i-x_{i+1}q_{i+1} \right).
\]
Hence we have
$$
x_{i+1}=\frac{\lambda q_i-p_i}{\lambda q_{i+1}}x_i \quad \forall\,i\ge 0.
$$
Expanding this recursion gives
\[
x_{i+1}=\frac{\lambda q_i-p_i}{\lambda q_{i+1}}\frac{\lambda q_{i-1}-p_{i-1}}{\lambda q_i}\ldots \frac{\lambda q_0-p_0}{\lambda q_1}x_0 \quad
\forall\,i\ge 0.
\]
We assumed that ${\bf x} \in D(A)$ and ${\bf x} \neq 0$ which now, given the expression above and \eqref{pi} for $i=0$, implies
\bel{eigcon}
\lambda \mbox{ is an eigenvalue of } A \Longleftrightarrow \sum_{k=1}^{\infty}\left| b_k\left( \lambda \right) \right| p_k < \infty \mbox{ and } 0=p_0-\lambda q_0 + \sum_{k=1}^{\infty} b_k\left( \lambda \right) p_k ,
\ee
where
\[
b_k\left( \lambda \right)=\prod_{i=0}^{k-1} \frac{\lambda q_i-p_i}{\lambda q_{i+1}}.
\]
Let us denote
\[
S_n\left( \lambda \right)=\begin{cases} p_0-\lambda q_0 & \mbox{for } n=0 \\ p_0-\lambda q_0 + \sum_{k=1}^nb_k\left( \lambda \right)p_k & \mbox{for } n \ge 1 \end{cases} \mbox{ and } S_n^{\ast}\left( \lambda \right)=\sum_{k=1}^n \left| b_k\left( \lambda \right) \right| p_k.
\]
By an easy induction argument we obtain the following product representation of $S_n\left( \lambda \right)$ for $n \ge 1$
\[
S_n\left( \lambda \right)=\left( p_0-\lambda q_0 \right)\prod_{k=1}^n\frac{\lambda q_k-p_k}{\lambda q_k}=\left( p_0-\lambda q_0 \right)\prod_{k=1}^n\left(1-\frac{c_k}{\lambda} \right), \mbox{ where } c_k=\frac{p_k}{q_k} \in \left( 0,1 \right).
\]
As observed in \eqref{eigcon}, we have to consider the situation when $\lim_{n \to \infty}S_n^{\ast}\left( \lambda \right) <\infty$ and $\lim_{n \to \infty} S_n\left( \lambda \right)=0$. Note that
\[
\lim_{n \to \infty}S_n\left( \lambda \right)=0 \Longleftrightarrow \lim_{n \to \infty}\left| S_n\left( \lambda \right) \right|^2=0 \Longleftrightarrow \lim_{n \to \infty} |p_0-\lambda q_0|^2\prod_{k=1}^n \left| 1-\frac{c_k}{\lambda} \right|^2=0.
\]
Since $\lambda=\gamma_1 \lambda_{\ell}$ for some $\ell=1,\hdots s-1$, $p_0-\lambda q_0 \neq 0$ (recall that $q_0=1$). Furthermore, we observe that $\frac{1}{\lambda_{\ell}}=\overline{\lambda_{\ell}}=\lambda_{s-\ell}$. Consequently
\[
\lim_{n \to \infty} S_n\left( \gamma_1 \lambda_{\ell} \right) =0 \Longleftrightarrow \prod_{k=1}^{\infty} \left| 1 - \frac{\lambda_{s-\ell}}{\gamma_1} c_k \right|^2=0.
\]
Since the single factor in the product has the form
\[
\left|1-\frac{\lambda_{s-\ell}}{\gamma_1}c_k \right|^2=1-2\frac{c_k}{\gamma_1}\cos\left(\frac{2 \left( s- \ell \right) \pi}{s} \right)+\left(\frac{c_k}{\gamma_1} \right)^2=1-2\frac{c_k}{\gamma_1}\cos\left(\frac{2  \ell \pi}{s} \right)+\left(\frac{c_k}{\gamma_1} \right)^2,
\]
we see that for all $k \ge 1$ it assumes the minimum for $\ell=1$. With that and \eqref{eigcon} in mind, we can conclude that $\gamma_1 \lambda_1$ is not an eigenvalue iff $\gamma_1 \lambda_{\ell}$ are not eigenvalues for any $\ell=1,2,\dots,s-1$ which leads to
\bel{equiv}
B_s^{({\bf p})} \mbox{ is not injective } \Longleftrightarrow \prod_{k=1}^{\infty} \left| 1 - \frac{\lambda_1}{\gamma_1} c_k \right|^2=0 \mbox{ and } \lim_{n \to \infty} S_n^{\ast}\left( \gamma_1 \lambda_1 \right) < \infty.
\ee
Thus we need to examine if the condition
\[
0=\prod_{k=1}^{\infty}\underbrace{\left( 1-2\frac{c_k}{\gamma_1}\cos\left(\frac{2 \pi}{s} \right)+\left(\frac{c_k}{\gamma_1} \right)^2 \right)}_{a_{k,s}}
\]
is satisfied.

Note that
\begin{itemize}
\item  $a_{k,2}=1+2\frac{c_k}{\gamma_1}+\left(\frac{c_k}{\gamma_1} \right)^2 > 1$,

\item  $a_{k,3}=1+\frac{c_k}{\gamma_1}+\left(\frac{c_k}{\gamma_1} \right)^2 > 1$,

\item $a_{k,4}=1+\left(\frac{c_k}{\gamma_1} \right)^2 > 1$.

\end{itemize}

Thus, in these three cases the above condition does not hold. Consequently, for any distribution ${\bf p}$ the operator $B_s^{({\bf p})}$ is injective for $s=2,3,4$.

Now consider $s \ge 5$ and a geometric distribution ${\bf p}$ with parameter $p \in (0,1).$ We choose the parameter $p$ in such a way that $ \cos \left( \frac{2 \pi}{5}\right)>\frac{p}{2}$. Note that for geometric distribution $c_k=p$, $k=1,2,\ldots$ Then
\[
2\cos \left( \frac{2 \pi}{s} \right) \ge 2\cos \left( \frac{2 \pi}{5} \right) >p \ge \frac{p}{\gamma_1},
\]
and thus
\[
2\frac{p}{\gamma_1} \cos \left( \frac{2 \pi}{s} \right)>\left( \frac{p}{\gamma_1} \right)^2,
\]
which yields
\[
1>\underbrace{1-2\frac{p}{\gamma_1} \cos \left( \frac{2 \pi}{s} \right)+\left( \frac{p}{\gamma_1} \right)^2}_{a_{k,s}}=const.
\]
Furthermore
\[
S_n^{\ast}\left( \gamma_1 \lambda_1 \right)=\sum_{k=1}^n \prod_{i=0}^{k-1} \left| \frac{\gamma_1 \lambda_1 q_i-p_i}{\gamma_1 \lambda_1 q_{i+1}} \right|p_k=\sum_{k=1}^n \frac{q_0p_k}{q_k}\prod_{i=0}^{k-1} \left| \frac{\gamma_1 \lambda_1 q_i-p_i}{\gamma_1 \lambda_1 q_i} \right|=p \sum_{k=1}^n \prod_{i=0}^{k-1} \sqrt{a_{k,s}}=p \sum_{k=1}^n \left( \sqrt{a_{1,s}} \right) ^k.
\]
Since $a_{k,s}=a_{1,s}<1$, we obtain that $\lim_{n \to \infty} S_n^{\ast}\left( \gamma_1 \lambda_1 \right) < \infty $. Therefore \eqref{equiv} yields that in the case of geometric distribution ${\bf p}$ with the parameter $p$ satisfying the inequality as above $B_s^{({\bf p})}$ for $s \ge 5$ is not injective.
\end{proof}

In Theorem \ref{th:iniekcja} we examined injectivity of $B_s^{({\bf p})}$ with domain $D(B_s^{({\bf p})})\subset\mathbb{C}^{\infty}$, but for the purposes of the problem we should only consider the injectivity or its lack on $D(B_s^{({\bf p})})\cap\R^{\infty}$. Of course, injectivity on $D(B_s^{({\bf p})})\subseteq \mathbb{C}^{\infty}$ implies injectivity on $D(B_s^{({\bf p})}) \cap \mathbb{R}^{\infty}$. Consequently, it follows from Theorem \ref{th:iniekcja}  that for any ${\bf p}$ the operator $B_s^{({\bf p})}$ is injective in $D(B_s^{({\bf p})}) \cap \mathbb{R}^{\infty}$ for $s=2,3,4$. The opposite implication may not be true in general but in the case we consider it turns out that it holds.

\begin{theorem}
\label{th:rzecz}
For any ${\bf p}$ the operator $B_s^{({\bf p})}$ is injective on $D(B_s^{({\bf p})}) \cap \mathbb{R}^{\infty}$ iff $s \in \lbrace 2,3,4 \rbrace$.
\end{theorem}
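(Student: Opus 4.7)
The \emph{if} direction is immediate: for $s \in \{2,3,4\}$, Theorem \ref{th:iniekcja} gives injectivity of $B_s^{({\bf p})}$ on the full complex domain $D(B_s^{({\bf p})})$, and this trivially restricts to injectivity on $D(B_s^{({\bf p})}) \cap \mathbb{R}^{\infty}$. The only real work lies in the \emph{only if} direction, which requires exhibiting, for each $s \ge 5$, some ${\bf p} \in \mathcal{M}_s$ together with a nonzero real element of $\ker B_s^{({\bf p})}$.

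My plan is to recycle the complex eigenvector ${\bf x}$ of $A$ produced in the proof of Theorem \ref{th:iniekcja}. The crucial structural observation is that, although \eqref{eq:prod} represents $B_s^{({\bf p})}$ as a product of factors $A-\gamma_1\lambda_k I$ with complex $\lambda_k$, the original definition \eqref{eq:dowod} exhibits $B_s^{({\bf p})}$ as a polynomial in $A$ with real coefficients, since $\gamma_1 \in \mathbb{R}$ and the matrix of $A$ is real. Consequently $B_s^{({\bf p})}$ commutes with complex conjugation entrywise: if $B_s^{({\bf p})}{\bf x} = 0$ then also $B_s^{({\bf p})}\bar{\bf x} = \overline{B_s^{({\bf p})}{\bf x}} = 0$, so both $\mathrm{Re}({\bf x}) = \tfrac12({\bf x}+\bar{\bf x})$ and $\mathrm{Im}({\bf x}) = \tfrac{1}{2i}({\bf x}-\bar{\bf x})$ are real sequences lying in $\ker B_s^{({\bf p})}$.

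I would then take ${\bf p}$ to be the geometric distribution with parameter $p$ satisfying $\cos(2\pi/5) > p/2$, exactly as in the proof of Theorem \ref{th:iniekcja}. That proof produces a nonzero ${\bf x} \in D(A)$ with $A{\bf x} = \gamma_1\lambda_1 {\bf x}$, defined recursively by $x_0 = 1$ and $x_{i+1} = \tfrac{\gamma_1\lambda_1 q_i - p_i}{\gamma_1\lambda_1 q_{i+1}}\,x_i$. Since $\lambda_1$ is non-real for $s \ge 3$ and the multiplicative factor in the recursion is non-real for every $i \ge 0$, the sequence ${\bf x}$ is not real-valued, so at least one of $\mathrm{Re}({\bf x}),\,\mathrm{Im}({\bf x})$ is nonzero. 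Domain membership is cheap: $A^k{\bf x} = (\gamma_1\lambda_1)^k{\bf x}$, hence ${\bf x} \in D(A^{s-1}) = D(B_s^{({\bf p})})$, and since $|\mathrm{Re}(x_k)|,\,|\mathrm{Im}(x_k)| \le |x_k|$ the same summability condition ensures that both $\mathrm{Re}({\bf x})$ and $\mathrm{Im}({\bf x})$ lie in $D(A^{s-1})$ as well.

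The only conceptual subtlety, and the step I expect to look like a potential obstacle, is the passage from a complex kernel element to a real one; but this is disposed of cleanly by the real-coefficients-plus-conjugation argument above, and the remainder amounts to bookkeeping already present in the proof of Theorem \ref{th:iniekcja}.
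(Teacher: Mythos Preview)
Your proposal is correct and follows essentially the same route as the paper: take the complex eigenvector ${\bf x}$ for the geometric ${\bf p}$ from Theorem~\ref{th:iniekcja}, use that $B_s^{({\bf p})}$ has real entries to kill $\overline{\bf x}$ as well, and extract the real part as a nonzero real kernel element. The only cosmetic differences are that the paper reaches $B_s^{({\bf p})}\overline{\bf x}=0$ via the product formula and the pairing $\overline{\lambda_1}=\lambda_{s-1}$ rather than your ``real polynomial in $A$'' observation, and it argues ${\bf z}={\bf x}+\overline{\bf x}\neq 0$ by noting ${\bf x}$ cannot be purely imaginary (your detour through the recursion factor being non-real is unnecessary, since $x_0=1$ already forces $\mathrm{Re}({\bf x})\neq 0$).
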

\begin{proof}
As already mentioned the implication $"\Leftarrow"$ is an immediate consequence of the same implication from Theorem \ref{th:iniekcja}.

We will prove the opposite implication by contradiction, i.e. we will show that there exists a distribution ${\bf p}$ such that for $s \ge 5$ the operator $B_s^{({\bf p})}$ is not injective in $D(B_s^{({\bf p})})\cap \mathbb{R}^{\infty}$. Let $s \ge 5$ and ${\bf p}=\left( p_k \right)_{k=0}^{\infty}$ be geometric distribution with parameter $p \in (0,1)$ such that $\cos \left( \frac{2 \pi}{5} \right) > \frac{p}{2}$. Then, from the proof of Theorem \ref{th:iniekcja}, we obtain that $\gamma_1 \lambda_1 , \gamma_1 \overline{\lambda_1}$ are eigenvalues of $A$. We denote a non-zero eigenvector attached to $\lambda=\gamma_1 \lambda_1$ by ${\bf x}=\left( x_0, x_1, \ldots \right)$ and the vector with conjugate entries by ${\bf \overline{x}}=\left( \overline{x}_0,\overline{x}_1, \ldots \right)$.

We will first note that ${\bf x}$ cannot be of the form ${\bf x}=i{\bf y}$ for a vector ${\bf y}\in\R^{\infty}$. Indeed, in such a case we would have $A{\bf y}=\lambda {\bf y}$ which is impossible since $\lambda$ is not a real number.

Note that ${\bf \overline{x}}$ is an eigenvector of $A$ attached to the eigenvalue $\overline{\lambda}$, because
\[
A{\bf \overline{x}}=\overline{A{\bf x}}=\overline{\lambda {\bf x}}=\overline{\lambda} {\bf \overline{x}},
\]
where the first equality holds since $A$ is a matrix with real entries. Consider
\[
B_s^{(\bf p)}\,{\bf x}=\prod_{k=1}^{s-1}\left( A - \gamma_1 \lambda_k I\right){\bf x}.
\]
Now, due to the fact that $\left( A-\gamma_1 \lambda_k I\right)$ and $\left( A-\gamma_1 \lambda_{\ell} I\right)$ commute, we obtain:
\[
B_s^{(\bf p)}\,{\bf x}=\left(\prod_{k=2}^{s-1}\left( A - \gamma_1 \lambda_k I\right)\right)\,\left( A - \gamma_1 \lambda_1 I\right){\bf x}=\prod_{k=2}^{s-1}\left( A - \gamma_1 \lambda_k I\right){\bf 0}={\bf 0}.
\]
The fact that, say, $\overline{\lambda_1}=\lambda_{s-1}$ and that ${\bf \overline{x}}$ is an eigenvector of $A$ respective to $\overline{\lambda}$ yield
\[
B_s^{(\bf p)}\,{\bf \overline{x}}=\prod_{k=1}^{s-2}\left( A - \gamma_1 \lambda_k I\right)\left( A - \gamma_1 \lambda_{s-1} I\right){\bf \overline{x}}=\prod_{k=1}^{s-2}\left( A - \gamma_1 \lambda_k I\right){\bf 0}={\bf 0}.
\]
Fix ${\bf z}={\bf x}+{\bf \overline{x}} \in \mathbb{R}^{\infty}$. Note that ${\bf z} \neq {\bf 0}$, because as it has already been observed, ${\bf x}$ cannot have all entries which are purely imaginery. Finally,
\[
B_s^{(\bf p)}\,{\bf z}=B_s^{(\bf p)}\,\left( {\bf x} + {\bf \overline{x}} \right)=B_s^{(\bf p)}\,{\bf x}+B_s^{(\bf p)}\,{\bf \overline{x}}={\bf 0}.
\]
Hence $B_s^{(\bf p)}$ is not injective in $D(B_s^{(\bf p)})\cap \mathbb{R}^{\infty}$ for $s \ge 5$.
\end{proof}
\section{Conclusion}
The above considerations on injectivity of $B_s^{(\bf p)}$ lead to the following correction to the result proposed in LB and recalled in Theorem \ref{th:hipoteza}.
\begin{proposition}
\label{c:prawd}
The assertion of Theorem \ref{th:hipoteza} holds true when $\gamma_1<1$ (that is, $N < \infty$) for any $s\ge 1$. For $\gamma_1 \ge 1$ (that is, $N= \infty$) the assertion of Theorem \ref{th:hipoteza} holds true for $s \in \lbrace 1,2,3,4 \rbrace$.
\end{proposition}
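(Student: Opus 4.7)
The plan is to exploit the reduction framework already assembled in Section 2 and to rely on the injectivity results Theorems \ref{th:iniekcja} and \ref{th:rzecz} to force ${\bf d}_1={\bf 0}$; once this is achieved the hypothesis collapses to adjacent linearity $e_1(j)=\gamma_0+\gamma_1 j$, and the already-known characterization for $s=1$ (Stepanov (1994), Weso\l owski and Ahsanullah (2001)) identifies the distribution of $X$ as one of the three laws listed in Theorem \ref{th:hipoteza}. Since the LB reduction already yields the identity $B_s^{({\bf p})}{\bf d}_1={\bf 0}$, the whole argument reduces to a case analysis of injectivity of $B_s^{({\bf p})}$ on the space in which ${\bf d}_1$ actually lives.

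First I would handle the case $\gamma_1<1$. I would begin by explaining why this forces $N<\infty$: from $W_{i+s}\ge W_i$ one has $e_s(j)\ge j$ for all $j$, so if $N=\infty$ then letting $j\to\infty$ in $\beta_0+\beta_1 j\ge j$ gives $\beta_1\ge 1$, i.e.\ $\gamma_1\ge 1$. Hence $\gamma_1<1$ occurs only when $N<\infty$. In that finite-dimensional setting $B_s^{({\bf p})}$ is represented by an $(N+1)\times(N+1)$ upper-triangular matrix whose diagonal entries are $\sum_{k=0}^{s-1}\gamma_1^{s-1-k}(p_j/q_j)^k>0$, so it is a genuine invertible matrix and ${\bf d}_1={\bf 0}$ follows for every $s\ge 1$.

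Next I would handle the case $\gamma_1\ge 1$, i.e.\ $N=\infty$. For $s=1$ the statement is just the known adjacent characterization and nothing needs to be reduced. For $s\in\{2,3,4\}$ I would observe that since $\beta_0,\beta_1>0$ the solutions $\gamma_0,\gamma_1$ of (\ref{eq:rownania}) are real and positive, so by its definition (\ref{eq:zapis}) the sequence ${\bf d}_1$ is a real vector lying in $D(B_s^{({\bf p})})\cap\R^{\infty}$. At this point Theorem \ref{th:rzecz} supplies exactly the injectivity required, and the identity $B_s^{({\bf p})}{\bf d}_1={\bf 0}$ forces ${\bf d}_1={\bf 0}$; the argument then concludes via the adjacent characterization as above.

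I expect no serious obstacle, since all the heavy lifting already sits in Theorem \ref{th:rzecz}. The only care to be taken is bookkeeping: confirming that the reduction (\ref{eq:dowod}) does land ${\bf d}_1$ inside $D(A^{s-1})\cap\R^{\infty}$ rather than merely in $\C^{\infty}$ (so that the real-valued version of the injectivity theorem is the one that applies), and rigorously justifying the equivalence $\gamma_1<1\Leftrightarrow N<\infty$ via monotonicity of weak records as sketched above. Both are routine, and the proposition follows immediately.
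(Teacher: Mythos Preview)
Your proposal is correct and follows essentially the same approach as the paper: invoke the LB reduction $B_s^{({\bf p})}{\bf d}_1={\bf 0}$, dispose of the finite case $\gamma_1<1$ via invertibility of the finite upper-triangular matrix, and for $\gamma_1\ge 1$ with $s\in\{2,3,4\}$ appeal to Theorem~\ref{th:rzecz} to conclude ${\bf d}_1={\bf 0}$ and reduce to the known adjacent characterization. You supply slightly more detail (the explicit diagonal entries of $B_s^{({\bf p})}$, the monotonicity argument for $\gamma_1<1\Rightarrow N<\infty$, and the observation that ${\bf d}_1$ is real), but the argument is the same.
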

\begin{proof}
It is well known (see Section 1) that the result for $s=1$ holds true. The proof in the case $\gamma_1 <1$ (which implies $N < \infty$) given in LB is correct since in this case the operator  $B_s^{({\bf p})}$ is invertible for any $s\ge 2$. For $\gamma_1 \ge 1$ (which implies $N=\infty$) due to Theorem \ref{th:rzecz} we have injectivity of $B_s^{(\bf p)}$ for $s \in \lbrace 2,3,4 \rbrace$ therefore the method of the proof proposed in LB is correct and thus the respective part of the assertion from  Theorem \ref{th:hipoteza} holds true.
\end{proof}

Finally, let us emphasize that for $\beta_1\ge 1$ and $s\ge 5$ it follows from Theorem \ref{th:iniekcja} that $B_s^{(\bf p)}$ may not be injective for some distributions ${\bf p}\in\mathcal{M}_s$, even such that appear in the conclusion of Theorem \ref{th:hipoteza} (the geometric law was identified as such in the proof of Theorem \ref{th:iniekcja}) and thus the argument used in LB is no longer valid. {\bf Therefore the problem of characterization of the parent distribution of the sequence of iid observations from the discrete distribution by the condition $\E(W_{i+s}|W_i)=\beta_1W_i+\beta_0$ for $\beta_1\ge 1 $ and $s\ge 5$ remains open!}

\vspace{3mm}\noindent
{\bf Acknowledgement.} We are very thankful to referees for their remarks which helped us a lot in improving  presentation of the material of the paper. It will appear in {\em Statistics}.

\vspace{5mm}\noindent
{\bf References}
\begin{enumerate}

\item {\sc Ahsanullah, M., Hamedani, G.}, Characterizations of continuous distributions based on conditional expectations of generalized order statistics. {\em  Comm. Statist. Th. Meth.} {\bf 42(19)} (2013), 3608-3613.

\item {\sc Ahsanullah, M., Holland, B.}, Some properties of the distribution of record values from the geometric distribution. {\em Statist. H.} {\bf 25} (1984), 319-327.

\item {\sc Ahsanullah, M., Nevzorov, V.B.}, {\em Records via Probability Theory}, Atlantis Press, 2015.

\item {\sc Ahsanullah, M., Weso\l owski, J.}, Linearity of best predictors for non-adjacent record values. {\em Sankhya, Ser. B} {\bf 60} (1998), 221-227.

\item {\sc Aliev, F.A.}, Characterizations of distributions through weak records. {\em J. Appl. Statist. Sci.} {\bf 8} (1998), 13-16.

\item {\sc Aliev, F.A.}, Characterization of geometric distribution through weak records. In: {\em Asymptotic Methods in Probability and Statistics with Applications} (N. Balakrishnan, I.A. Ibragimov, V.B. Nevzorov, eds), Birkh\"auser, Boston 2001, 299-307.

\item {\sc Arnold, B.C., Balakrishnan, N., Nagaraja, N.H.} {\em Records}, Wiley, New York 1998.

\item {\sc Bairamov, I., Ahsanullah, M., Pakes, A.G.}, A characterization of continuous distributions via regression on pairs of record values. {\em Austral. N. Zeal. J. Statist.} {\bf 47(4)} (2005), 543-547.

\item {\sc Beg, M.I., Ahsanullah, M., Gupta, R.C.}, Characterizations via regressions of generalized order statistics. {\em Statist. Meth.} {\bf 12} (2013), 31-41.

\item {\sc Bieniek, M.}, On characterization of distributions by regression of adjacent generalized order statistics. {\em Metrika} {\bf 66(2)} (2007), 233-242.

\item {\sc Bieniek, M.}, A note on characterizations of distributions by regressions of non-adjacent generalized order statistics. {\em Austral. N. Zeal. J. Statist.} {\bf 51(1)} (2009), 89-99.

\item {\sc Bieniek, M., Szynal, D.}, Characterizations of distributions via
linearity of regression of generalized order statistics.  {\em Metrika} {\bf 58} (2003), 259-272.

\item {\sc Cramer, E., Kamps, U., Keseling, C.}, Characterizations via linear regression for ordered random variables - a unifying approach. {\em Comm. Statist. Th. Meth.} {\bf 33(12)} (2004), 2885-2911.

\item {\sc Danielak, K., Dembi\'nska, A.}, On characterizing discrete distributions via conditional expectations of weak records. {\em Metrika} {\bf 66(2)} (2007), 129-138.

\item {\sc Dembi\'nska, A., L\'opez-Bl\'azquez, F.}, A characterization of geometric distribution through $k$th weak records. {\em Comm. Statist. Th. Meth.} {\bf 34(12)} (2005), 2345-2351.

\item {\sc Dembi\'nska, A., Weso\l owski, J.}, Linearity of regression for non-adjacent order statistics. {\em Metrika} {\bf 48} (1998), 215-222.

\item {\sc Dembi\'nska, A., Weso\l owski, J.}, Linearity of regression for non-adjacent record values. {\em J. Statist. Plann. Infer.} {\bf 90} (2000), 195-205.

\item {\sc Ferguson, T.S.}, On characterizing distributions by properties of order statistics. {\em Sankhya Ser. A} {\bf 29} (1967),
265–278.

\item {\sc Franco, M., Ruiz, J.M.}, Characteriztion of discrete distributions based on conditional expectations of record values. {\em Statist. Pap.} {\bf 42} (2001), 101-110.

\item {\sc Gupta, R.C., Ahsanullah, M.}, Some characterization results based on the conditional expectation of a function of non-adjacent order statistics (record values). {\em Ann. Inst. Statist. Math.} {\bf 56(4)} (2004), 721-732.

\item {\sc Huang, W.-J., Su, J.-S.}, On certain problems involving order statistics - a unified approach through order statistics property of point processes. {\em Sankhya, Ser. A} {\bf 61} (1999), 36-49.

\item {\sc Kirmani, S.N.U.A., Alam, S.N.}, Characterization of the geometric distribution by the form of a predictor. {\em Comm. Statist. Theory Meth.} {\bf 9} (1980), 541-547.

\item {\sc Korwar, R.M.}, On characterizing distributions for which the second record value has a linear regression on the first. {\em Sankhya, Ser. B} {\bf 46} (1984), 108-109.

\item {\sc L\'opez-Bl\'azquez, F.}, Linear prediction of weak records. The discrete case. {\em Theory Probab. Appl.} {\bf 48(4)} (2004), 718-723.

\item {\sc L\'opez-Bl\'azquez, F., Moreno-Rebollo, J.L.}, A characterization of distributions based on linear regressions of order statistics and record values. {\em Sankhya, Ser. A} {\bf 59} (1997), 311-323.

\item {\sc L\'opez-Bl\'azquez, F., Weso\l owski, J.}, Discrete distributions for which the regression of the first record on the second is linear. {\em Test} {\bf 10} (2001), 121-131.

\item {\sc L\'opez-Bl\'azquez, F., Weso\l owski, J.}, Linearity of regression for the past weak and ordinary records. {\em Statistics} {\bf 38(6)} (2004), 457-464.

\item {\sc Nagaraja, H.N.}, On a characterization based on record values. {\em Austral. J. Statist.} {\bf 19} (1977), 70-73.

\item {\sc Nagaraja, H.N.}, Some characterizations of discrete distributions based on linear regressions of adjacent order statistics. {\em J. Statist. Plann. Infer.} {\bf 20} (1988), 65-75.

\item {\sc Nagaraja, H.N., Sen, P., Srivastava, R.C.}, Some characterizations of geometric tail distributions based on record values. {\em Statist. Pap.} {\bf 30} (1989), 147-155.

\item {\sc Nevzorov, V.B.}, {\em Records: Mathematical Theory}, AMS, Providence 2001.

\item {\sc Rao, R.C., Shanbhag, D.N.}, Recent result on characterizations of probability distributions: a unified approach through extensions of Deny's theorem. {\em Adv. Appl. Probab.} {\bf 18} (1986), 660-678.

\item {\sc Srivastava, R.C.}, Two characterizations of the geometric distribution by record values. {\em Sankhya, Ser. B} {\bf 40} (1979), 276-278.

\item {\sc Stepanov, A.V.}, A characterization theorem for weak records. {\em Theory Probab. Appl.} {\bf 38} (1994), 762-764.

\item {\sc Vervaat, W.}, Limit theorem for records from discrete distributions. {\em Stoch. Proc. Appl.} {\bf 1} (1973), 317-334.

\item {\sc Weso\l owski, J., Ahsanullah, M.}, Linearity of regression for non adjacent weak records. {\em Statist. Sinica} {\bf 11} (2001), 39-52.

\item {\sc Yanev, G.}, Characterization of exponential distribution via regression of one record value on two non-adjacent record values. {\em Metrika} {\bf 75} (2012), 743-760.

\end{enumerate}

\end{document}